\documentclass[10pt]{lad-35my}
\usepackage[cp1251]{inputenc}
\usepackage[russian]{babel}
\usepackage[T2A]{fontenc}
\usepackage{amsthm, amsmath, amssymb, amsbsy, amsfonts, latexsym, euscript, amsfonts,mathrsfs}
\usepackage{calrsfs}
\usepackage{upgreek}
\usepackage{graphicx}
\usepackage{color}
\usepackage{wasysym}
\usepackage{enumerate}
\usepackage{xcolor} 

\usepackage[pdftex, unicode,colorlinks,
linkcolor=blue,citecolor=red,bookmarksopen,pdfhighlight=/N]{hyperref}

\def\leq{\leqslant}

\def\geq{\geqslant}

\newtheorem{theorem}{\bf Теорема}
\newtheorem*{theoB}{\bf Теорема Безиковича о покрытии}
\newtheorem{proposition}{\bf Предложение}
\newtheorem{corollary}{\bf Следствие}

\theoremstyle{remark}
\newtheorem{remark}{\bf Замечание}
\newtheorem{definition}{\bf Определение}
\newtheorem{example}{\bf Пример}
\numberwithin{equation}{section}

\setcounter{page}{3}
\begin{document}

\thispagestyle{plain}

$$
{}
$$


\begin{center}
{\Large \bf ОЦЕНКИ СНИЗУ СУБГАРМОНИЧЕСКИХ ФУНКЦИЙ
\\[4pt]
ЧЕРЕЗ РАССТОЯНИЕ ГАРНАКА}
\end{center}
\vskip 10mm

\noindent
\parbox{80mm}{
\begin{center}{\Large Б. Н. Хабибуллин}
\\[4pt]
{\small Башкирский государственный университет}
\\[-2pt]
{\small Уфа, Россия}
\\[-2pt]
{\small khabib-bulat@mail.ru}
\end{center}
}
\hfill
\parbox{70mm}{
\begin{center}
{\Large Э. У. Таипова}
\\[4pt]
{\small
Институт математики с
ВЦ УФИЦ РАН}
\\[-2pt]
{\small  Уфа, Россия}
\\[-2pt]
{\small elina.taipova@bk.ru}
\end{center}
}

\vskip5mm

\markboth{{\thepage}\hfill{\small \bf
Б. Н. Хабибуллин, Э. У. Таипова}}
{{\small \bf
Оценки снизу субгармонических функций через расстояние Гарнака}
\hfill{\thepage}}


\renewcommand{\thefootnote}{}
\footnotetext{
$\copyright$\;\;
Б. Н. Хабибуллин, Э. У. Таипова, 2021}

\vskip 5mm

\parbox{146mm}{\noindent
{\bf Аннотация.}
Пусть $G$ ---  произвольная  непустая ограниченная область в конечномерном евклидовом пространстве. Основные результаты --- оценки  снизу общего характера в точках из $G$ для произвольной  субгармонической функции $u\not\equiv -\infty$ на замыкании области $G$ через максимум функции $u$ на границе области $G$. Эти результаты новые и для плоских областей $G$, а для интервалов $G$ на числовой прямой также  ранее не отмечались. Они показывают, что ключевую роль в этих оценках играет расстояние Гарнака. Дальнейшие применения  к  субгармоническим, выпуклым, голоморфным функциям, а также к мероморфным функциям и разностям субгармонических функций в областях конкретного вида предполагается изложить в продолжении этой  работы

Let $G$ be a nonempty bounded domain in a finite-dimensional Euclidean space. The main results are general estimates from below at points from $G$ for an arbitrary subharmonic function $u\not\equiv -\infty$ on the closure of the domain $G$ through the maximum of the function $u$ on the boundary of the domain $G$. These results are new for planar domains $G$, and for intervals of $G$ on the numerical line have also not been previously noted. They show that the Harnack distance plays a key role in these estimates. Further applications to subharmonic, convex, holomorphic functions, as well as to meromorphic functions and differences of subharmonic functions in domains of a particular type are supposed to be outlined in the continuation of this article.

}

\vskip10mm

\section{Введение}
\subsection{Необходимые определения,  сведения и соглашения}
Одноточечные множества $\{x\}$ часто записываем без фигурных скобок, т.е. просто как $x$. 
${\mathbb N}$:=\{1,2, \dots\} и ${\mathbb R}$ 
---  множества {\it натуральных\/} и {\it вещественных\/} чисел,  
а ${\mathbb R}^+:=\bigl\{x\in {\mathbb R}\bigm| x\geq 0\bigr\}$ --- множество  {\it положительных\/} чисел. 
Всюду далее ${\tt d}\in {\mathbb N}$ --- размерность {\it евклидова пространства\/} 
${\mathbb R}^{\tt d}$ с {\it евклидовой нормой\/}  $|x|:=\sqrt{x_1^2+\dots +x_{\tt d}^2}$ 
для $x:=(x_1,\dots ,x_{\tt d})\in {\mathbb R}^{\tt d}$ и функцией ${\sf dist}(\cdot, \cdot)$  {\it евклидова расстояния,\/}
 а также с  {\it одноточечной компактификацией Александрова\/} 
${\mathbb R}_{\text{\tiny $\infty$}}^{\tt d}:={\mathbb R}^{\tt d}\cup \infty$. Так, при ${\tt d=1}$  это  даёт ${\mathbb R}_{\text{\tiny $\infty$}}$,  отличающееся от {\it расширенной числовой прямой\/} $\overline {\mathbb R}:=-\infty\cup {\mathbb R}\cup +\infty$, полученной порядковым пополнением с добавлением двух концов $+\infty:=\sup {\mathbb R}$
и $-\infty:=\inf {\mathbb R}$, а $\overline {\mathbb R}^+:={\mathbb R}^+\cup+\infty$.  Для {\it пустого множества\/} $\emptyset$ по определению $\inf \varnothing :=+\infty$ и  $\sup \varnothing :=-\infty$.
Через $x^+:=\sup \{0,x\}$ обозначаем {\it положительную часть\/} от 
 $x\in \overline {\mathbb R}$, а $x^-:=(-x)^+$ --- его {\it отрицательная часть\/}.    Вообще всюду далее
{\it положительность\/} --- это $\geq 0$, а {\it отрицательность\/} --- это $\leq 0$. 

Для  $S\subset {\mathbb R}^{\tt d}$
через $\complement S:={\mathbb R}_{\text{\tiny $\infty$}}^{\tt d}\setminus S$, $\overline S$, $S^\circ$ 
 и  $\partial S$ обозначаем соответственно {\it дополнение, замыкание, внутренность\/} и {\it границу\/} множества $S$ в  
${\mathbb R}_{\text{\tiny $\infty$}}^{\tt d}$. Всюду   $D\neq \emptyset$ --- 
{\it область\/} в ${\mathbb R}^{\tt d}$, т.е. открытое связное подмножество  в ${\mathbb R}^{\tt d}$.  
При $r\in \overline{\mathbb R}^+$ и  $x\in {\mathbb R}^{\tt d}$ через $B_x(r):=\bigl\{y\in {\mathbb R}^{\tt d}\bigm| |y-x|<r\bigr\}$,   $\overline  B_x(r):=\bigl\{y\in {\mathbb R}_{\text{\tiny $\infty$}}^{\tt d}\bigm| |y-x|\leq r\bigr\}$ и  $\partial \overline B_x(r):=\overline B_x(r)\setminus  B_x(r)$ обозначаем соответственно {\it открытый шар,\/} {\it замкнутый шар\/} и {\it сферу радиуса $r$ с центром\/} $x$ с  {\it площадями поверхностей единичных сфер\/} $\partial \overline  B_0(1)\subset {\mathbb R}^{\tt d}$, выражаемыми 
через  гамма-функцию Эйлера $\Gamma$  как   
\begin{equation}\label{{kK}s}
s_{\tt d-1}=\frac{2\pi^{\tt d/2}}{\Gamma (\tt d/2)}, 
\quad s_{\tt 0}=2, \;
s_{\tt 1}=2\pi, \; s_{\tt 2}=4\pi,\;  s_{\tt 3}=\pi^2, \dots  . 
\end{equation}
Ключевая роль зависящей от размерности ${\tt d}$ пространства ${\mathbb R}^{\tt d}$ {\it возрастающей\/} функции 
\begin{equation}\label{kKd-2}
\Bbbk_{\tt d-2} \colon  t\underset{0<t\in {\mathbb R}^+}{\longmapsto} \begin{cases}
t &\text{\it  при ${\tt d=1}$},\\
\ln t  &\text{\it  при ${\tt d=2}$},\\
 -\dfrac{1}{t^{\tt d-2}} &\text{\it при ${\tt d>2}$,} 
\end{cases} 
\qquad \Bbbk (0):=\lim_{0<t\to 0} \Bbbk (t)\in \overline {\mathbb R},
\end{equation}
в теории потенциала и субгармонических функций обусловлена тем, что при каждом фиксированном $x\in {\mathbb R}^{\tt d}$ порождённая ею  функция 
\begin{equation}\label{kd0}
y\underset{y \in {\mathbb R}^{\tt d}}{\longmapsto}
\frac{1}{s_{\tt d-1}\widehat{\tt d}}\Bbbk_{\tt d-2}\bigl(|y-x|\bigr), \qquad
\widehat{\tt d}:=\max\{{\tt 1,d-2}\}=1+({\tt d-3})^+\in {\mathbb N},
\end{equation}
--- это {\it фундаментальное решение уравнения Лапласа\/} $\bigtriangleup u=\updelta_x$ на ${\mathbb R}^{\tt d}$, где  $\bigtriangleup$   --- {\it оператор Лапласа,\/} действующий в смысле теории обобщённых функций, а $\updelta_x$ ---  вероятностная  {\it мера Дирака\/}  с носителем  ${\sf supp\,} \updelta_x=\{x\}$ в одной точке $x$. Функцию $\Bbbk_{\tt d-2}$ часто будем записывать без нижнего индекса  ${\tt d-2}$ как просто $\Bbbk$, когда  значения размерности 
${\tt d}\in {\mathbb N}$ ясны из формулировок  или контекста. 

Через  ${\sf har}(S)$ и ${\sf sbh}(S)$  обозначаем класс сужений на $S\subset {\mathbb R}^{\tt d}$ всех соответственно {\it гармонических\/}
(локально аффинных при ${\tt d}=1$) и {\it субгармонических\/} (локально выпуклых при ${\tt d}=1$) функций на каких-либо  открытых  окрестностях множества $S\subset {\mathbb R}^{\tt d}$ \cite{Rans}, \cite{HK}, \cite{Brelot}.

\begin{definition}[{\cite[1.3]{Rans}, \cite{Kohn}}]\label{dH}
 Для  области $D\subset {\mathbb R}^{\tt d}$ и конуса 
${\sf har}^+(D)\subset {\sf har}(D)$ всех {\it положительных\/} гармонических функций на $D$
{\it расстояние  Гарнака\/} между точками $x\in D$ и $y\in D$ 
на  $D$  обозначаем и определяем как
\begin{equation}\label{hard}
\hspace{-1mm}{\sf dist}_{{\sf har}}^D (x,y):=\inf\Bigl\{ d\in {\mathbb R}^+\Bigm|
\frac{1}{d}  h(y)\leq h(x) \leq d h(y)\quad  
\text{для всех  $h\in {\sf har}^+(D)$}\Bigr\}\geq 1.
\end{equation}
\end{definition}
Основные свойства расстояния Гарнака \eqref{hard} отражены в   \cite[1.3]{Rans}, \cite{Kohn}. 
  Например,  $\ln {\sf dist}_{{\sf har}}^D$ --- {\it полуметрика на $D$, непрерывная  относительно евклидовой метрики в $D$
\cite[\S~1]{Kohn},} которая совпадает с  {\it гиперболической метрикой\/} в случае  {\it односвязной\/} области 
 $D$ в {\it комплексной плоскости\/} ${\mathbb C}$, отличной от ${\mathbb C}$, при отождествлении ${\mathbb C}$ с ${\mathbb R}^{\tt 2}$ \cite{Chi12}. 

Наряду  с {\it симметричностью\/} имеет место {\it мультипликативное  неравенство треугольника\/}
\begin{equation}\label{pmt}
{\sf dist}_{{\sf har}}^D(x,y)\leq {\sf dist}_{{\sf har}}^D(x,o)\cdot {\sf dist}_{{\sf har}}^D(o,y)\text{ \it для любых $x,o,y\in D$}.
\end{equation}  
Для расстояния Гарнака справедлив  принцип подчинения  \cite[теорема 1.3.6]{Rans}, \cite[\S~3, 
теорема 3.3]{Kohn}, который будет использован лишь в частном  случае \cite[следствие  1.3.7]{Rans}, \cite[\S~3, 
теорема 3.2]{Kohn}: 
\begin{equation}\label{pr}
{\sf dist}_{{\sf har}}^G\leq {\sf dist}_{{\sf har}}^{D} \quad \text{\it  на области $D$ при $D\subset G$}.
\end{equation} 

Для шара расстояние Гарнака до центра шара выписывается явно \cite[\S~3, приложение]{Kohn}:
\begin{equation}\label{DB}
{\sf dist}_{{\sf har}}^{B_{o}(r)}(o, x)=\frac{\bigl(r+|x-o|\bigr)r^{{\tt d-2}}}{\bigl(r-|x-o|\bigr)^{{\tt d}-1}}
\quad\text{\it при  $x\in B_{o}(r)$}.
\end{equation}
Оно  возрастает при увеличении расстояния $|x-x_0|$ от точки $x$ до центра $x_0$ шара $B_{x_0}(R)$. 

Класс ${\sf sbh}(S)$ содержит и функцию $\boldsymbol{-\infty}$, равную сужению на $S$ функции,  тождественно равной $-\infty$ в некоторой открытой окрестности множества $S$. 
Отношение равенства $=$ на  ${\sf sbh}(S)$ для пары функций означает, что существует открытое множество в ${\mathbb R}^{\tt d}$, содержащее $S$, на котором обе эти функции определены и  совпадают поточечно.   
Если  $O\subset {\mathbb R}^{\tt d}$ --- {\it открытое множество,\/}   и субгармоническая функция 
 $u\in {\sf sbh}(O)$ такова, что $u\not\equiv-\infty$ на  каждой связной компоненте множества $O$, то  однозначно определена её  {\it мера Рисса\/} 
\begin{equation}\label{df:cm}
\varDelta_u\overset{\eqref{kd0}}{:=} \frac{1}{s_{\tt d-1}\widehat{\tt d}} {\bigtriangleup}  u, 
\end{equation}
являющаяся {\it мерой Радона\/} на $O$, т.е. мерой Бореля, конечной на компактах из $O$.
 Если $u\equiv -\infty$ на какой-нибудь связной компоненте множества $O$, то по определению мера Рисса $\varDelta_u$ принимает значение $+\infty$ на каждом подмножестве из этой связной компоненты. Это определение переносится и на функции  $u\in {\sf sbh}(S)$, субгармонические на борелевском множестве $S$, как сужение на $S$ меры Рисса субгармонической функции $u$ в некоторой открытой окрестности множества $S$. 

Для меры Бореля  
$\mu$  на ${\mathbb R}^{\tt d}$ и точки $x\in {\mathbb R}^{\tt d}$ 
\begin{equation}\label{muyr} 
\mu_x^{\text{\tiny \rm rad}}(t)\underset{t\in {\mathbb R}^+}{:=}\mu\bigl(\overline B_x(t) \bigr)\in \overline {\mathbb R}^+ 
\end{equation}
--- {\it радиальная  считающая функция меры $\mu$ с центром $x\in {\mathbb R}^{\tt d}$,\/}  {\it возрастающая\/} и {\it непрерывная справа\/} на ${\mathbb R}^+$  функция из ${\mathbb R}^+$ в $\overline {\mathbb R}^+$. 
 Кроме того, с  числом  $\widehat{\tt d}\overset{\eqref{kd0}}{=}1+({\tt d}-3)^+$ 
\begin{equation}\label{muyrN} 
{\sf N}_x^{\mu} (r)\underset{r\in \overline {\mathbb R}^+}{:=}\widehat{\tt d}\int_0^r\frac{\mu_x^{\text{\tiny \rm rad}}(t)}{t^{{\tt d}-1}} 
{\,{\mathrm d}} t\in \overline {\mathbb R}^+ 
\end{equation}
---  радиальная {\it усреднённая,\/} или проинтегрированная, считающая функция меры $\mu$ с центром $x\in {\mathbb R}^{\tt d}$,  {\it возрастающая\/} и {\it непрерывная\/} на ${\mathbb R}^+$ как функция из ${\mathbb R}^+$ в $\overline {\mathbb R}^+$.

\subsection{Основные результаты об оценках снизу  субгармонических функций}
В первом результате  оценивается снизу значение функции в одной точке области.  

Для $S\subset {\mathbb R}^{\tt d}$ через $\diameter\!S:=\sup\limits_{x,y\in S}|x-y|$ обозначаем
{\it евклидов диаметр} множества $S$. 

\begin{theorem}\label{th1_1}
Пусть $D$ --- ограниченная  область в $ {\mathbb R}^{\tt d}$ с фиксированной точкой  
$o\in D$,  а  $u$ --- субгармоническая функция на замыкании  $\overline D$  со значением  $u(o)=0$ и мерой Рисса $\varDelta_u$. Тогда для сужения    $\varDelta\overset{\eqref{df:cm}}{:=}\varDelta_u\!\bigm|_{\overline {D}}$
 её меры Рисса $\varDelta_u$ на замыкание $\overline {D}$ области $D$ имеем 
\begin{equation}\label{infuI20}
u(x)\geq -\bigl({\sf dist}_{{\sf har}}^{D}(o,x) -1\bigr)\sup_{\partial D} u
-{\sf N}_x^{\varDelta}(\diameter\!D)
\quad \text{для   каждой точки  $x\in D$}.
\end{equation}
Если для этой функции $u\in {\sf sbh}(\overline D)$ указана  область $G\subset {\mathbb R}^{\tt d}$, для которой
 \begin{equation}\label{oBDG+}
o\in  D\subset \overline D\subset G\subset \overline G\subset {\mathbb R}^{\tt d},\quad 
u\in {\sf sbh}(\overline G),\quad u(o)=0,
\end{equation}
 то при любых значениях $r_x\in (0, \diameter\!D]$ выполнено неравенство 
\begin{subequations}\label{rd}
\begin{gather}
\hspace{-1mm}u(x)\geq -\bigl({\sf dist}_{{\sf har}}^{D}(o,x) -1\bigr)\sup_{\partial D} u- 
\frac{\Bbbk(\diameter\!D)-\Bbbk(r_x)}{\Bbbk(R+\text{\dj})-\Bbbk(R)}\sup\limits_{y\in \partial D}{\sf dist}_{{\sf har}}^{G\!\setminus\!o}\bigl(y,\partial B_o(R)\bigr)\sup_{\partial G} u
 -{\sf N}_x^{\varDelta}(r_x),
\tag{\ref{rd}u}\label{infuI2}
\\
\intertext{где по приципу максимума $\sup\limits_{\partial D} u$ в правой части можно заменить 
на $\sup\limits_{\partial G} u$, а}
R:={\sf dist}(o,\partial D)
\tag{\ref{rd}R}\label{rdr}
\\
\intertext{--- радиус шара с центром $o$, вписанного в область $D$,}
\text{\dj} :={\sf dist}(D, \complement G).
\tag{\ref{rd}d}\label{rdd}
\end{gather}
\end{subequations}
---  евклидово расстояние от границы  $\partial D$ области $D$ до границы $\partial G$ области $G$.

При $u(x)\neq -\infty$ правые  части неравенств \eqref{infuI20} и  \eqref{infuI2} 
конечны. 
\end{theorem}
Неравенство \eqref{infuI20}
доказано в конце раздела \ref{Sec3}, а \eqref{rd} --- в центральной части  раздела \ref{Sec5}. 

\begin{remark} Можно  снять условие $u(o)=0$ при $u(o)\neq -\infty$, 
если заменить  $u(x)$ на разность  $u(x)-u(o)$ в левых  частях 
\eqref{infuI20} и \eqref{infuI2}, а также $\sup\limits_D u$ на $\sup\limits_D u-u(o)$ в правой части неравенства  \eqref{infuI20} и соответственно  $\sup\limits_G u$ на $\sup\limits_G u-u(o)$ в правой части  \eqref{infuI2}. С теми же заменами  неравенства  \eqref{infuI20} и \eqref{infuI2} верны и при $u(o)=-\infty$ в случае $u(x)\neq -\infty$, поскольку обе части неравенств  \eqref{infuI20} и \eqref{infuI2} корректно определены, а  в левой части будет $+\infty$. 
\end{remark}

Если рассматривать оценки снизу субгармонической функции $u\in  {\sf sbh}(\overline D)$ на некотором множестве, то, как правило,  целесообразно дать оценку вне некоторого исключительного малого множества без упоминания о мере Рисса функции или её сужениях. Мы приведём здесь  вариант таких оценок снизу, основанный  на так называемом методе нормальных точек, восходящем к оценке А.~Картана модуля многочлена снизу, для чего потребуется 

\begin{definition}[{\cite{Carleson}, \cite{Federer}, \cite{Rodgers}, \cite{HedbergAdams}, \cite{Eid07}, \cite{VolEid13}}]\label{defH}
Для  величины  $r\in \overline {\mathbb R}^+\setminus 0$  и произвольной  функции   $h\colon [0,r)\to {\mathbb R}^+$
функцию множеств 
\begin{equation}\label{mr}
{\mathfrak m}_h^{\text{\tiny $r$}}\colon S\underset{S\subset {\mathbb R}^{\tt d}}{\longmapsto}  \inf \Biggl\{\sum_{j\in N} h(r_j)\biggm| N\subset {\mathbb N},\,  S\subset \bigcup_{j\in N} 
\overline B_{x_j}(r_j), \, x_j\in {\mathbb R}^{\tt d}, \, r_j\underset{j\in N}{\leq} r\Biggr\} \in \overline {\mathbb R}^+
\end{equation}
называем {\it $h$-обхватом  Хаусдорфа радиуса\/} 
$r$. Для каждого $S\subset {\mathbb R}^{\tt d}$ значения    ${\mathfrak m}_h^{\text{\tiny $r$}}(S)$ убывают по $r$ и существует предел   
\begin{equation}\label{hH}
{\mathfrak m}_h^{\text{\tiny $0$}}(S):=\lim_{0<r\to 0} {\mathfrak m}_h^{\text{\tiny $r$}}(S)
\geq {\mathfrak m}_h^{\text{\tiny $r$}}(S)\geq {\mathfrak m}_h^{\text{\tiny $\infty$}}(S)
 \quad \text{\it для любого  $S\subset {\mathbb R}^{\tt d}$}. 
\end{equation}
При $h(0)=0$ все обхваты ${\mathfrak m}_h^{\text{\tiny $r$}}$ --- внешние меры,  а ${\mathfrak m}_h^{\text{\tiny $0$}}$ определяет  {\it $h$-меру Хаусдорфа\/} ${\mathfrak m}_h^{\text{\tiny $0$}}$, являющуюся регулярной мерой Бореля. Чаще всего используются   степенные   функции $h_p$ степени $p\in {\mathbb R}^+$   с нормирующим множителем вида
\begin{equation}\label{hd}
h_p\colon t\underset{t\in {\mathbb R}^+}{\longmapsto} 
c_pt^p, \quad\text{где } 
c_p:=\dfrac{\pi^{p/2}}{\Gamma(p/2+1)}, 
\end{equation}
а  $h_p$-обхват радиуса  $r$ и  $h_p$-меру Хаусдорфа 
называем соответственно  {\it $p$-мер\-н\-ы\-ми  обхватом радиуса  $r$\/} и\/ {\it мерой Хаусдорфа\/}, которые обозначаем соответственно как
\begin{equation}\label{p-m}
p\text{\tiny-}{\mathfrak m}^{\text{\tiny $r$}}:={\mathfrak m}_{h_p}^{\text{\tiny $r$}}, \quad  
p\text{\tiny-}{\mathfrak m}^{\text{\tiny $0$}}:={\mathfrak m}_{h_p}^{\text{\tiny $0$}}. 
\end{equation}
\end{definition}

Здесь и далее классические и широко известные свойства обхватов и мер Хаусдорфа из основных источников, указанных в  начале определения  \ref{defH}, часто  используются без явно прописанных  конкретных  ссылок. 

\begin{example}\label{ex:1} 
Пространственная $\tt d$-мерная {\it мера Лебега\/} 
 на ${\mathbb R}^{\tt d}$ в любой размерности  ${\tt d}\in {\mathbb N}$ совпадает с  
${\tt d}$-мерной  мерой Хаусдорфа ${\tt d}\text{\tiny-}{\mathfrak m}^{\text{\tiny $0$}}(S)$, а 
 $p$-мерная  мера   $p\text{\tiny-}{\mathfrak m}^{\text{\tiny $0$}}$ в ${\mathbb R}^{\tt d}$ 
при $p>{\tt d}$ нулевая.
\end{example}

\begin{corollary}\label{cor1}
Пусть выполнены условия теоремы {\rm \ref{th1_1}}, включая  \eqref{oBDG+}, а также
\begin{equation}\label{key}
S\subset \overline S\subset D, \quad 0<r\leq \diameter\!D.
\end{equation}
Тогда для любой функции $h\colon [0,r]\to {\mathbb R}^+$ с $h(0)=0$ и конечным интегралом\/ {\rm (ср. с \eqref{muyrN})}
\begin{equation}\label{N0h}
N_0^h(r):=\widehat{\tt d}\int_0^r\frac{h(s)}{s^{{\tt d}-1}}{\,{\mathrm d}} s<+\infty
\end{equation}
найдётся множество  $E\subset S$, для которого 
\begin{subequations}\label{eEr}
\begin{align}
 \inf_{x\in S\setminus E} u(x)&\geq -\biggl(
\sup_{x\in S}{\sf dist}_{{\sf har}}^{D}(o,x) -1
\notag
\\
&+\frac{\Bbbk(\diameter\!D)-\Bbbk(r)}{\Bbbk(R+\text{\dj})-\Bbbk(R)}\sup\limits_{x\in \partial D}{\sf dist}_{{\sf har}}^{G\!\setminus\!o}\bigl(x,\partial B_o(R)\bigr)+N_0^h(r)\biggr)
\sup_{\partial G} u,
\tag{\ref{eEr}u}\label{{eEr}u}
\\
\intertext{и в то же время имеет место оценка сверху на $h$-обхват радиуса $r$ множества $E$}
\mathfrak m_h^r(E)&\leq 
\frac{5^{\tt d}}{\Bbbk(R+\text{\dj})-\Bbbk(R)}\sup\limits_{x\in \partial D}{\sf dist}_{{\sf har}}^{G\!\setminus\!o}\bigl(x,\partial B_o(R)\bigr).
\tag{\ref{eEr}E}\label{{eEr}E}
\end{align}
\end{subequations}
\end{corollary}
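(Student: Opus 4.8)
The plan is to invoke Theorem~\ref{th1_1} once, with the single radius $r_x=r$ admissible by \eqref{key}, so that the only term of \eqref{infuI2} still sensitive to the distribution of the Riesz mass is the defect ${\sf N}_x^{\varDelta}(r)$; the whole problem then reduces to confining this defect to a set of small $h$-content. First observe that $u(o)=0$ and $o\in D\subset G$ force $\sup_{\partial G}u=\sup_{\overline G}u\ge u(o)=0$ by the maximum principle; if this supremum vanishes then $u\equiv 0$, $\varDelta=0$, and $E:=\varnothing$ settles everything, so I assume $\sup_{\partial G}u>0$ and set $\widetilde\varDelta:=\varDelta/\sup_{\partial G}u$. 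I then define the exceptional set
\[
E:=\bigl\{x\in S\bigm| {\sf N}_x^{\widetilde\varDelta}(r)>N_0^h(r)\bigr\},
\]
and record that ${\sf N}_x^{\varDelta}(r)=\sup_{\partial G}u\cdot{\sf N}_x^{\widetilde\varDelta}(r)$, so that ${\sf N}_x^{\varDelta}(r)\le N_0^h(r)\sup_{\partial G}u$ for every $x\in S\setminus E$.

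To get \eqref{{eEr}u} I start from \eqref{infuI2} with $r_x=r$, replacing $\sup_{\partial D}u$ by $\sup_{\partial G}u$ as Theorem~\ref{th1_1} permits. For $x\in S\setminus E$ I insert the two bounds ${\sf dist}_{{\sf har}}^{D}(o,x)\le\sup_{x\in S}{\sf dist}_{{\sf har}}^{D}(o,x)$ and ${\sf N}_x^{\varDelta}(r)\le N_0^h(r)\sup_{\partial G}u$; since $\sup_{\partial G}u\ge0$ and both quantities enter \eqref{infuI2} with a minus sign, each substitution only lowers the right-hand side, and passing to the infimum over $x\in S\setminus E$ reproduces \eqref{{eEr}u} term by term.

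For \eqref{{eEr}E} the covering enters. If $x\in E$ there must be a radius $\rho_x\in(0,r]$ with $\widetilde\varDelta\bigl(\overline B_x(\rho_x)\bigr)>h(\rho_x)$: otherwise $\widetilde\varDelta\bigl(\overline B_x(t)\bigr)\le h(t)$ for all $t\in(0,r]$ would give, via \eqref{N0h}, ${\sf N}_x^{\widetilde\varDelta}(r)\le\widehat{\tt d}\int_0^r h(t)\,t^{1-{\tt d}}\,{\mathrm d}t=N_0^h(r)$, excluding $x$ from $E$. The balls $\overline B_x(\rho_x)$, $x\in E$, cover $E$ with radii $\le r$; a Vitali-type selection extracts a countable disjoint subfamily $\overline B_{x_j}(\rho_j)$ whose fivefold enlargements still cover $E$, and re-covering each enlargement by balls of radius $\rho_j\le r$, which are admissible in \eqref{mr}, costs a dimensional factor, here $5^{\tt d}$. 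Using disjointness, $\widetilde\varDelta\bigl(\overline B_{x_j}(\rho_j)\bigr)>h(\rho_j)$, and the fact that $\varDelta$ is carried by $\overline D$,
\[
\mathfrak m_h^r(E)\le 5^{\tt d}\sum_j h(\rho_j)\le 5^{\tt d}\sum_j\widetilde\varDelta\bigl(\overline B_{x_j}(\rho_j)\bigr)\le 5^{\tt d}\,\widetilde\varDelta(\mathbb{R}^{\tt d})=\frac{5^{\tt d}\,\varDelta(\overline D)}{\sup_{\partial G}u}.
\]

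It remains to bound the total mass, and this --- not the covering, which is standard --- is the \emph{main obstacle}, because it must reconstruct the harmonic-distance geometry already present in \eqref{infuI2}. The claim is
\[
\varDelta(\overline D)\le\frac{\sup_{\partial G}u}{\Bbbk(R+\text{\dj})-\Bbbk(R)}\sup_{y\in\partial D}{\sf dist}_{{\sf har}}^{G\setminus o}\bigl(y,\partial B_o(R)\bigr),
\]
which together with the previous display is exactly \eqref{{eEr}E}. The mass in the inscribed ball $B_o(R)\subset D$ is elementary: the Jensen relation gives ${\sf N}_o^{\varDelta}(t)\le\sup_{\partial G}u$ for $t\le R+\text{\dj}\le{\sf dist}(o,\partial G)$ (recall $u(o)=0$), while monotonicity of $\varDelta\bigl(\overline B_o(\cdot)\bigr)$ on $[R,R+\text{\dj}]$ and the identity $\widehat{\tt d}\int_R^{R+\text{\dj}}t^{1-{\tt d}}\,{\mathrm d}t=\Bbbk(R+\text{\dj})-\Bbbk(R)$ force $\varDelta\bigl(\overline B_o(R)\bigr)\le\sup_{\partial G}u/\bigl(\Bbbk(R+\text{\dj})-\Bbbk(R)\bigr)$. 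The remaining mass of $\overline D$ outside $B_o(R)$, which the covering balls may capture far from $o$, is transferred to the inscribed sphere by the same Harnack-distance transport in $G\setminus o$ underlying Theorem~\ref{th1_1}, and it is exactly this transfer that produces the factor $\sup_{y\in\partial D}{\sf dist}_{{\sf har}}^{G\setminus o}(y,\partial B_o(R))$ and carries the real weight of the proof.
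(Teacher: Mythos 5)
Your architecture coincides with the paper's (apply \eqref{infuI2} with $r_x:=r$ off an exceptional set generated by ``bad radii'', cover that set by balls of radii $\leq r$, and convert a bound on the total mass $\varDelta(\overline D)$ into \eqref{{eEr}E}), but precisely at the step you yourself flag as carrying ``the real weight of the proof'' you stop short, and this is a genuine gap. Your claimed mass bound
\begin{equation*}
\varDelta(\overline D)\leq\frac{\sup_{\partial G}u}{\Bbbk(R+\text{\dj})-\Bbbk(R)}\,\sup_{y\in\partial D}{\sf dist}_{{\sf har}}^{G\!\setminus\!o}\bigl(y,\partial B_o(R)\bigr)
\end{equation*}
is established only for the mass inside the inscribed ball $B_o(R)$; for the mass of $\overline D\setminus B_o(R)$ --- which can sit at distance $\text{\dj}$ from $\partial G$, where any radial Jensen estimate at $o$ gives nothing --- you appeal to an unspecified ``Harnack-distance transport''. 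The Harnack distance by itself compares values of positive harmonic functions and cannot bound the mass of a measure; what is missing is the function the transport must act on, namely the Green function ${\mathsf g}_o^G$, positive and harmonic on $G\!\setminus\!o$ by \eqref{vig}. This is exactly the content of Propositions \ref{gsn} and \ref{cord}: \eqref{hard} applied to $h:={\mathsf g}_o^G$, together with domain monotonicity \eqref{prpodG} and the explicit value \eqref{Byr} giving ${\mathsf g}_o^G\geq\Bbbk(R+\text{\dj})-\Bbbk(R)$ on $\partial B_o(R)$, yields the lower bound \eqref{gDG} for $\inf_{\partial D}{\mathsf g}_o^G$; then the minimum principle for the superharmonic ${\mathsf g}_o^G$ (property \eqref{iig}), a Chebyshev-type estimate, and the Poisson--Jensen formula at the single point $o$, $\int{\mathsf g}_o^G\,{\mathrm d}\varDelta_u={\mathsf H}_u^G(o)-u(o)\leq\sup_{\partial G}u$, give \eqref{de}, which is your claim. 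Note that once ${\mathsf g}_o^G$ is in play no splitting of $\overline D$ into $B_o(R)$ and its complement is needed; without it (or an equivalent positive superharmonic function whose integral against $\varDelta_u$ you can evaluate) nothing in your sketch controls the outer mass.

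A secondary defect: even granting the mass bound, your covering step does not produce the constant $5^{\tt d}$ in \eqref{{eEr}E}. A Vitali selection yields disjoint balls whose fivefold enlargements cover $E$, but the enlargements have radii $5\rho_j$, inadmissible in \eqref{mr}, and re-covering $\overline B_{x_j}(5\rho_j)$ by balls of radius $\rho_j$ costs strictly more than $5^{\tt d}$ balls when ${\tt d}\geq 2$: the volume ratio is exactly $5^{\tt d}$ and every covering has density greater than $1$ (standard net arguments give a factor of order $11^{\tt d}$). The paper avoids enlargement altogether by invoking the Besicovitch-type covering theorem quoted in its proof: the balls $\overline B_{x_j}(t_{x_j})$ centered at points of $E$ with the bad radii themselves cover $E$ with overlap multiplicity at most $5^{\tt d}$, so bounded overlap --- not disjointness --- gives $\sum_j h(t_{x_j})\leq 5^{\tt d}\varDelta(\overline D)\big/\sup_{\partial G}u$. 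Your route, completed, proves \eqref{{eEr}E} only with a worse dimensional constant. The remaining ingredients of your proposal --- deriving \eqref{{eEr}u} from \eqref{infuI2} with $\sup_{\partial D}u$ replaced by $\sup_{\partial G}u$, the implication ``no bad radius in $(0,r]$ implies ${\sf N}_x^{\varDelta}(r)\leq N_0^h(r)\sup_{\partial G}u$'' defining $E$, and the explicit treatment of the degenerate case $\sup_{\partial G}u=0$ (which the paper leaves implicit) --- are correct and agree with the paper.
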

Следствие  \ref{cor1}  доказано в конце раздела \ref{Sec5}. 
\begin{remark}
Чем больше $h$, тем слабее неравенство  \eqref{{eEr}u}, но тем и меньше исключительное множество $E$, и наоборот. В частности, при домножении $h$ на строго положительные эта зависимость обратно пропорциональная. 
Так, в случае  степенной функции \eqref{hd} с $p\in ({\tt d}-2,{\tt d}]$, дополненной числовым множителем $B\in {\mathbb R}^+\setminus 0$, для $N_0^h(r)$ в правой части \eqref{{eEr}u} получаем
 $$
N_0^h(r)\overset{\eqref{N0h}}{=}Bc_p\widehat{\tt d}\int_0^rt^{p-{\tt d}+1}{\,{\mathrm d}} t
=B\frac{c_p\widehat{\tt d}}{p-({\tt d}-2)}r^{p-({\tt d}-2)},
$$
в то время как для  $p$-мерного  обхвата \eqref{p-m} радиуса  $r$ исключительного множества $E$ имеем 
$$
p\text{\tiny-}{\mathfrak m}^{\text{\tiny $r$}}(E)\overset{\eqref{{eEr}E}}{\leq}
\frac{1}{B}\cdot\frac{5^{\tt d}}{\Bbbk(R+\text{\dj})-\Bbbk(R)}\sup\limits_{x\in \partial D}{\sf dist}_{{\sf har}}^{G\!\setminus\!o}\bigl(x,\partial B_o(R)\bigr).
$$

\end{remark}

Мы не обсуждаем необъятную информацию о предшествующих оценках снизу субгармонических функций и их широких применениях, а также  не рассматриваем оценки снизу в конкретных ограниченных областях (круг, шар, выпуклая область и т.п.) и переходы от них к неограниченным областям (плоскость, полуплоскость и (полу)полоса, пространство, полупространство и пр.) путём исчерпания ограниченными.

\section{Оценки снизу гармонических функций}

\begin{proposition}\label{th2_1}
Пусть  $o$ --- некоторая точка в области $D\subset {\mathbb R}^{\tt d}$ и $h\in {\sf har}(D)$. Тогда 
\begin{equation}\label{hm}
 \inf_S h-h(o)\geq
-\biggl(\sup_{x\in S} {\sf dist}_{{\sf har}}^D(o,x)-1\biggr)\Bigl(\sup_D h-h(o)\Bigr)
\quad\text{для любого $S\subset D$,}
\end{equation}
где обе перемножаемые скобки в правой части  положительны. 
\end{proposition}
\begin{proof}  Положительность первой скобки в правой части неравенства \eqref{hm} следует из 
\eqref{hard}, а положительность  второй скобки очевидна ввиду $o\in D$. 
Таким образом, если функция $h$ не ограничена в $D$, то справа получаем $-\infty$ и неравенство  \eqref{hm} верно. 
В случае  ограниченной сверху на $D$ функции $h$ положим  
\begin{equation}\label{M}
M:=\sup_D h \geq h(o).
\end{equation} 
Тогда $H:=M-h\in {\sf har}^+(D)$  и по определению \ref{dH} расстояния Гарнака 
для любой точки $x\in D$
\begin{equation*}
M-h(x)=H(x)\overset{\eqref{hard}}{\leq} {\sf dist}_{{\sf har}}^D(o,x) H(o)={\sf dist}_{{\sf har}}^D(o,x)\bigl(M-h(o)\bigr)
\end{equation*}
откуда $h(x)\geq M-{\sf dist}_{{\sf har}}^D(o,x)\bigl(M-h(o)\bigr)$ для любой точки $x\in S\subset D$.
Применение точной нижней  грани по $x\in S$ к обеим частям последнего неравенства  даёт 
\begin{subequations}\label{Mh}
\begin{gather}
\inf_S h\geq \inf_{x\in S}\Bigl(M-{\sf dist}_{{\sf har}}^D(o,x)\bigl(M-h(o)\bigr)\Bigr)
=M+\inf_{x\in S} \Bigl(-{\sf dist}_{{\sf har}}^D(o,x)\bigl(M-h(o)\bigr)\Bigr)
\tag{\ref{Mh}i}\label{Mhi}
\\
=M+\bigl(M-h(o)\bigr) \inf_{x\in S} \bigl(-{\sf dist}_{{\sf har}}^D(o,x)\bigr)=
M-\bigl(M-h(o)\bigr) \sup_{x\in S} {\sf dist}_{{\sf har}}^D(o,x)\bigr),
\tag{\ref{Mh}ii}\label{Mhii}
\end{gather}
\end{subequations}
где переход от \eqref{Mhi} к \eqref{Mhii} опирается на {\it положительность} $M-h(o)\overset{\eqref{M}}{\geq} 0$.
Вычитая $h(o)$ из крайних частей цепочки неравенств \eqref{Mh}, получаем  
\begin{equation*}
\inf_S h-h(o)\geq M-h(o)-\bigl(M-h(o)\bigr) \sup_{x\in S} {\sf dist}_{{\sf har}}^D(o,x)\bigr)
=\bigl(M-h(o)\bigr)\Bigl(1-\sup_{x\in S} {\sf dist}_{{\sf har}}^D(o,x)\Bigr),
\end{equation*}
откуда, возвращаясь  к обозначению \eqref{M}, приходим к  \eqref{hm}.
\end{proof}

\section{Оценки функции Грина сверху}\label{Sec3}

\subsection{Ёмкость и $\Bbbk$-потенциалы}
Для меры Радона $\mu$ на ${\mathbb R}^{\tt d}$
$$
{\sf pt}_{\mu}(y)\underset{y\in {\mathbb R}^{\tt d}}{:=}\int \Bbbk\bigl(|x-y|\bigr){\,{\mathrm d}} \mu(x)\in  \overline {\mathbb R}
$$
---  $\Bbbk$-потенциал меры $\mu$, который корректно определён и является субгармонической функцией, не равной тождественно $-\infty$ на ${\mathbb R}^{\tt d}$,  тогда и только  тогда, когда  ${\sf N}_x^{\mu} (+\infty)<+\infty$ для какой-нибудь точки $x\in {\mathbb R}^{\tt d}$. 
Для борелевского  $E\subset {\mathbb R}^{\tt d}$ определяется 
его {\it ёмкость\/} \cite{Rans}, \cite{HK} 
\begin{equation}\label{CapE}
\text{\sf Cap} (E) :=\inf_{ O= O^\circ\supset E}  
\sup_{{K=\overline K\subset O}} 
 \Biggl\{\Bbbk^{-1}\left(\int {\sf pt}_{\mu}(y) {\,{\mathrm d}} \mu(y) \right)\Biggm|
\begin{array}{c}
{\sf supp\,} \mu\subset K, \; \mu(K)=1,\\
 \text{$\mu$ --- мера Бореля} 
\end{array}\Biggr\}\in \overline {\mathbb R}^+.
\end{equation}

В настоящей  статье  нам достаточно рассматривать субгармонические функции  лишь на {\it связных 
 борелевских подмножествах $S\subset {\mathbb R}^{\tt d}$ с непустой внутренностью\/} $S^\circ\neq \emptyset$. 
Ёмкость таких множеств $S$ всегда ненулевая, т.е. $\text{\sf Cap} (S)>0$.  Для функции $u\in {\sf sbh} ( S)$ через 
\begin{equation}\label{keyiu}
{(-\infty)}_{u}(S):=\bigl\{x\in S \bigm| u(x)=-\infty\bigr\}
\end{equation} обозначаем  {\it $(-\infty)$-мно\-ж\-е\-с\-т\-во\/} функции $u$  в $S$ \cite[3.5]{Rans}, где зачастую пишем просто ${(-\infty)}_{u}$, не указывая $S$.  
Если $u\in {\sf sbh}(S)\setminus \boldsymbol{-\infty}$, то  её $(-\infty)$-мно\-ж\-е\-с\-т\-во в $S$ --- это $G_{\delta}$-множество, т.е. пересечение не более чем счётного множества  открытых множеств,  нулевой  емкости $\text{\sf Cap}\bigl((-\infty)_u\bigr)=0$ \cite[теоремы 5.10, 5.32]{HK}. 
Подмножество $E\subset S$ {\it полярное,\/} если существует   функция $u\in {\sf sbh}(S)\setminus \boldsymbol{-\infty}$, для которой  $E\subset {(-\infty)}_{u}$, а (внешняя) ёмкость полярного множества равна нулю,  и обратное тоже  верно \cite[5.9, теорема 5.32]{HK}.  

Для функции ограниченной вариации $m$ на интервале в $\overline {\mathbb R}$, содержащем $(a,b]$, 
{\it интеграл\/} (Римана\,-- или  Лебега\,--\,){\it Стилтьеса\/}
по интервалу с концами $a,b$  понимаем как интеграл по  открытому слева и замкнутому справа интервалу $(a,b]\subset \overline {\mathbb R}$, если не оговорено иное:
\begin{equation}\label{keyint}
\int_a^b \dots {\,{\mathrm d}} m:=\int_{(a,b]} \dots {\,{\mathrm d}} m.
\end{equation}

\begin{proposition}\label{prkN}
Если $\mu$ --- мера Бореля с носителем ${\sf supp\,} \mu \subset \overline B_x(r)\subset {\mathbb R}^{\tt d}$, то  
\begin{equation}\label{reprk}
\int_{\overline B_x(r)}\Bigl( \Bbbk (r)-\Bbbk \bigl(|y-x|\bigr)\Bigr){\,{\mathrm d}} \mu(y) \overset{\eqref{keyint}}{\underset{x\in {\mathbb R}^{\tt d}}{=}}
\int_0^r\bigl( \Bbbk (r)-\Bbbk (t)\bigr){\,{\mathrm d}} \mu_x^{\text{\tiny \rm rad}}(t) \underset{x\in {\mathbb R}^{\tt d}}{=}
{\sf N}_x^{\mu} (r)\in \overline {\mathbb R}^+ ,\quad r\in {\mathbb R}^+\setminus 0,
\end{equation}
где  значения интегралов  конечны  тогда и только тогда  $x\notin (-\infty)_{{\sf pt}_{\mu}}=\Bigl\{  x\in {\mathbb R}^{\tt d}\Bigm| {\sf N}^{\mu}_x(r)=+\infty\Bigr\}$.
\end{proposition}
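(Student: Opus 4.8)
The plan is to read the middle integral as a Lebesgue--Stieltjes integral against the radial counting function, to derive the first equality from the image--measure (change of variables) theorem and the second from Tonelli's theorem, and then to read off the finiteness dichotomy from a one-line rearrangement of the $\Bbbk$-potential. First I would introduce the radial projection $\varrho\colon y\mapsto|y-x|$ and the image measure $\nu:=\mu\circ\varrho^{-1}$ on $\overline{\mathbb R}^+$, which is carried by $[0,r]$ because ${\sf supp\,}\mu\subset\overline B_x(r)$ and whose distribution function is exactly the radial counting function:
\[
\nu\bigl([0,t]\bigr)=\mu\bigl(\varrho^{-1}([0,t])\bigr)=\mu\bigl(\overline B_x(t)\bigr)=\mu_x^{\text{\tiny \rm rad}}(t).
\]
Since $\Bbbk$ is increasing and $|y-x|\le r$ on ${\sf supp\,}\mu$, the integrand $t\mapsto\Bbbk(r)-\Bbbk(t)$ is nonnegative on $[0,r]$, so the image--measure theorem applies with no integrability caveat and gives the first equality
\[
\int_{\overline B_x(r)}\bigl(\Bbbk(r)-\Bbbk(|y-x|)\bigr){\,\mathrm d}\mu(y)=\int\bigl(\Bbbk(r)-\Bbbk(t)\bigr){\,\mathrm d}\nu(t),
\]
the right-hand side being the Stieltjes integral against $\mu_x^{\text{\tiny \rm rad}}$; here the endpoint convention \eqref{keyint} is precisely what fixes the treatment of $t=0$, i.e. of the possible point mass $\mu(\{x\})=\mu_x^{\text{\tiny \rm rad}}(0)$, which for ${\tt d}\ge2$ is weighted by $\Bbbk(r)-\Bbbk(0)=+\infty$ and forces both sides to be $+\infty$ simultaneously.

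For the second equality I would first record the elementary but decisive identity $\Bbbk'(t)=\widehat{\tt d}\,t^{-({\tt d}-1)}$ for $t>0$, by checking the three branches of \eqref{kKd-2} separately; consequently
\[
\Bbbk(r)-\Bbbk(t)=\widehat{\tt d}\int_{(t,r]}\frac{\mathrm{d}s}{s^{{\tt d}-1}}\qquad\text{for }t\in[0,r],
\]
where for $t=0$ the value $\Bbbk(0)=\lim_{0<s\to0}\Bbbk(s)$ is used and the integral is allowed to be $+\infty$. Substituting this representation into the Stieltjes integral and interchanging the order of integration by Tonelli's theorem (legitimate since $\mathrm{d}s/s^{{\tt d}-1}$ and ${\,\mathrm d}\mu_x^{\text{\tiny \rm rad}}$ are both nonnegative) collapses the inner integration in $t$ to $\mu_x^{\text{\tiny \rm rad}}(s)$, up to the at most countable jump set of $\mu_x^{\text{\tiny \rm rad}}$, which is Lebesgue-null and hence invisible to $\mathrm{d}s$. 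This yields
\[
\widehat{\tt d}\int_0^r\frac{\mu_x^{\text{\tiny \rm rad}}(s)}{s^{{\tt d}-1}}{\,\mathrm d}s={\sf N}_x^{\mu}(r),
\]
which is the second equality.

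For the finiteness assertion I would use that $\mu$ is a finite mass distribution on the compact ball $\overline B_x(r)$, so $\mu(\overline B_x(r))<+\infty$ and $\Bbbk(r)\in\mathbb R$, and split the potential as
\[
{\sf pt}_{\mu}(x)=\int_{\overline B_x(r)}\Bbbk\bigl(|y-x|\bigr){\,\mathrm d}\mu(y)=\Bbbk(r)\,\mu\bigl(\overline B_x(r)\bigr)-{\sf N}_x^{\mu}(r),
\]
where the last step is the identity just established. Since the subtracted term ${\sf N}_x^{\mu}(r)\in\overline{\mathbb R}^+$ is nonnegative and the first term is finite, the difference is well defined in $\overline{\mathbb R}$ and ${\sf pt}_{\mu}(x)>-\infty$ holds if and only if ${\sf N}_x^{\mu}(r)<+\infty$. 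This simultaneously identifies $(-\infty)_{{\sf pt}_{\mu}}$ with $\bigl\{x\mid{\sf N}_x^{\mu}(r)=+\infty\bigr\}$ and shows that all three displayed quantities are finite exactly off this set.

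I expect the only genuine obstacle to be the bookkeeping at the origin: the kernel singularity $\Bbbk(0)=-\infty$ for ${\tt d}\ge2$ together with a possible concentration of $\mu$ at $x$ must be propagated consistently through both the image--measure step and the endpoint convention \eqref{keyint}, so that every equality persists as an identity in $\overline{\mathbb R}^+$ (taking the common value $+\infty$ on $(-\infty)_{{\sf pt}_{\mu}}$). Once the $t=0$ endpoint is handled, everything else is a routine combination of the change-of-variables theorem and Tonelli's theorem.
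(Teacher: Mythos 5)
Your proposal is correct in substance and, notably, supplies a complete argument where the paper gives none: the paper explicitly omits the proof of Proposition~\ref{prkN}, indicating only that \eqref{reprk} can be obtained by integration by parts, as in \cite[\S~2]{KhaShm20}, combined with standard facts from the potential-theory references. So your route is genuinely different from the indicated one. Stieltjes integration by parts applied to $\int_0^r\bigl(\Bbbk(r)-\Bbbk(t)\bigr){\,{\mathrm d}}\mu_x^{\text{\tiny \rm rad}}(t)$ produces $\widehat{\tt d}\int_0^r\mu_x^{\text{\tiny \rm rad}}(t)\,t^{1-{\tt d}}{\,{\mathrm d}}t$ plus boundary terms, after which one must check separately that $\bigl(\Bbbk(r)-\Bbbk(\varepsilon)\bigr)\mu_x^{\text{\tiny \rm rad}}(\varepsilon)\to 0$ as $\varepsilon\downarrow 0$ whenever ${\sf N}_x^{\mu}(r)<+\infty$, and handle the divergent case by a limiting argument. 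Your combination of the image measure under $y\mapsto|y-x|$ with the kernel identity $\Bbbk(r)-\Bbbk(t)=\widehat{\tt d}\int_t^r s^{1-{\tt d}}{\,{\mathrm d}}s$ (your formula $\Bbbk'(t)=\widehat{\tt d}\,t^{-({\tt d}-1)}$ does check out on all three branches of \eqref{kKd-2}) and Tonelli's theorem buys uniformity: every step is an identity in $\overline{\mathbb R}^+$, with no boundary-term analysis and no case split between finite and infinite values, and the dichotomy $(-\infty)_{{\sf pt}_{\mu}}=\bigl\{x\bigm|{\sf N}_x^{\mu}(r)=+\infty\bigr\}$ falls out of the rearrangement ${\sf pt}_{\mu}(x)=\Bbbk(r)\,\mu\bigl(\overline B_x(r)\bigr)-{\sf N}_x^{\mu}(r)$, legitimate because $\Bbbk(r)\,\mu\bigl(\overline B_x(r)\bigr)$ is finite for a mass distribution and $0<r<+\infty$.

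One slip to fix: your gloss of convention \eqref{keyint} is inverted. As written, \eqref{keyint} sets $\int_0^r:=\int_{(0,r]}$, so it \emph{excludes} $t=0$: a point mass $\mu(\{x\})=\mu_x^{\text{\tiny \rm rad}}(0)>0$ would be dropped from the middle integral in \eqref{reprk}, not ``weighted by $\Bbbk(r)-\Bbbk(0)=+\infty$''. Under that literal reading the middle member equals the integral over $\overline B_x(r)\setminus x$ and need not match the outer two: for $\mu=\updelta_x$ and ${\tt d}\geq 2$ the outer members are $+\infty$ while the middle is $0$, and for ${\tt d}=1$ the discrepancy is $r\,\mu(\{x\})$. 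Your treatment --- integrating over $[0,r]$, equivalently extending $\mu_x^{\text{\tiny \rm rad}}$ by $0$ to $t<0$ so that the jump at the origin carries the mass $\mu(\{x\})$ --- is in fact the reading under which \eqref{reprk} holds verbatim for every $x$, so your mathematics is the sound version; just present it as the necessary normalization of the $t=0$ endpoint rather than as a paraphrase of \eqref{keyint} as stated. For the paper's purposes the point is harmless: the estimates of Theorem~\ref{th1_1} are applied at points with $u(x)\neq-\infty$, where ${\sf N}_x^{\varDelta}(r)<+\infty$ and, for ${\tt d}\geq2$, necessarily $\varDelta_u(\{x\})=0$.
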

Доказательство опускаем, поскольку  \eqref{reprk} легко получается интегрированием по частям,  как в \cite[\S~2]{KhaShm20}, с обоснованиями в рамках классической теории потенциала \cite{Rans}, \cite{HK}, \cite{Brelot}, \cite{Landkof}. 

\subsection{Функция Грина, её потенциал и оценка их сверху} 
Пусть  $D\neq \varnothing $ --- {\it ограниченная область в ${\mathbb R}^{\tt d}$.}   Для  любой точки $x\in D$ однозначно  определена  продолженная {\it функция Грина  ${\sf g}^D_x\colon {\mathbb R}_{\text{\tiny $\infty$}}^{\tt d}\to \overline {\mathbb R}^+$ для области $D$ с полюсом в точке $x\in D$\/} \cite[5.7.4]{HK}, \cite[гл.~IX, \S~2]{Brelot}, 
равная {\it полунепрерывной сверху регуляризации точной верхней грани всех  субгармонических функций $v$ на ${\mathbb R}_{\text{\tiny $\infty$}}^{\tt d}\setminus x$, тождественно равных нулю на дополнении $\complement \overline D$, для которых  
\begin{equation}\label{gx0}
\limsup\limits_{x\neq y\to x}\frac{v(y)}{-\Bbbk(|y-x|)}\leq 1, 
\end{equation}
а  в точке $x$ полагаем    ${\sf g}^D_x(x):=+\infty$.\/} 
Вместе с последним равенством  
функция Грина  ${\sf g}^D_x$   полностью, однозначно и даже с избытком характеризуется  свойствами  
\begin{enumerate}[{\rm (i)}]
{\it
\item\label{ig} ${\sf g}^D_x\in {\sf sbh}^+({\mathbb R}_{\text{\tiny $\infty$}}^{\tt d}\setminus x)$ --- субгармоническая положительная функция вне полюса $x$,
\item\label{ig0} ${\sf g}^D_x(y)\equiv 0$ на $\complement \overline D$ ---  обращается в нуль всюду вне замыкания $\overline  D$,
\item\label{iiig} ${\sf g}_x^D(y)=-\Bbbk\bigl(|y- x|\bigr)+O(1)$ при $x\neq y\to x$, {\rm что сильнее \eqref{gx0}},
\item\label{iig} $-{\sf g}^D_x\bigm|_{D}\in {\sf sbh}(D)$, {\rm т.е.}  ${\sf g}^D_x$  --- супергармоническая функция на $D$, 
\item\label{ivg} ${\sf g}^D_x(y)={\sf g}^D_y(x)$ при  $y\in D$ --- симметричность функции Грина на $D$.
\item\label{vig} ${\sf g}^D_x\in {\sf har}\Bigl((D\setminus x)\cup \complement \overline D \Bigr)$  --- гармоничность  
функции Грина вне границы $\partial D$ и полюса $x$,}
\end{enumerate} 
где последнее свойство \eqref{vig} --- результат пересечения трёх свойств \eqref{ig}, \eqref{ig0}, \eqref{iig}. Для функции Грина справедлив принцип подчинения:  
{\it если  области $D$ и $G$ ограничены, то
\begin{equation}\label{prpodG}
{\sf g}^D_x(y)\leq {\sf g}^G_y(x) 
\text{при $D\subset G$ для любых  $x\in D$ и $y\in {\mathbb R}_{\text{\tiny $\infty$}}^{\tt d}$}.
\end{equation}
}
Для  $x\in {\mathbb R}^{\tt  d}$ и  $0<r\in {\mathbb R}^+$ 
\begin{equation}\label{Byr}
{\mathsf g}_x^{B_x(r)}(y)=\Bigl(\Bbbk(r)-\Bbbk\bigl(|y-x|\bigr)\Bigr)^+
\quad\text{\it при всех $y\in {\mathbb R}_{\text{\tiny $\infty$}}^{\tt d}$}.
\end{equation}

Вероятностная {\it гармоническая  мера\/ $\updelta_x^{\complement D} $ для $D$ в точке\/}   $x\in D$ 
--- это мера Рисса субгармонической на ${\mathbb R}_{\text{\tiny $\infty$}}^{\tt d}\setminus x$ функции Грина 
${\sf g}^D_x\overset{\eqref{ig}}{\in} {\sf sbh}^+({\mathbb R}_{\text{\tiny $\infty$}}^{\tt d}\setminus x)$ для $D$ с полюсом $x\in D$: 
\begin{equation}\label{upg}
\updelta_x^{\complement D}:=\varDelta_{{\sf g}^D_x}\overset{\eqref{df:cm}}{=} \frac{1}{s_{\tt d-1}\widehat{\tt d}}\bigtriangleup\!{\sf g}^D_x, \quad {\sf supp\,} \updelta_x^{\complement D}\overset{\eqref{vig}}{\subset} \partial D, 
\quad \updelta_x^{\complement D}({\mathbb R}^{\tt d})=1,  \quad x\in D,
\end{equation}
где обозначение $\updelta_x^{\complement D}$ в отличие от более распространённого  $\upomega_x^D$ обусловлено тем, что гармоническую меру для $D$ в  $x\in D$ можно получить и как {\it выметание меры Дирака $\updelta_x$ 
на дополнение $\complement D$ относительно конуса $ C(\overline D)\bigcap {\sf har} (D)$} \cite[гл. IV, \S\S~1--3]{Landkof}
в обозначении  $C(S)$ для класса непрерывных функций на $S$. 
Обращение равенства \eqref{upg} --- это представление для функции Грина (см. \cite[теорема 4.4.7]{Rans}, \cite[(5.7.13)]{HK}, или, более общ\'о, \cite[\S~1]{Kha03})
\begin{equation}\label{repg}
{\sf g}^D_x(y)=\int_{\partial D} \Bbbk\bigl(|y-z|\bigr){\,{\mathrm d}} \updelta_x^{\complement D}(z)- 
\Bbbk\bigl(|y-x|\bigr)\text{ при любых $x\in D$ и $y\in  {\mathbb R}^{\tt d}$}.
\end{equation} 
\begin{proposition}\label{pr_g}
Пусть   $D\neq \varnothing$ --- ограниченная область в ${\mathbb R}^{\tt d}$.  Тогда 
\begin{equation}\label{gek0}
{\sf g}_x^D(y)
\leq \Bbbk(\diameter\! D)- \Bbbk\bigl(|y-x|\bigr)
\quad\text{при  любых  $x\in D$ и $y\in \overline D$.}
\end{equation}
\end{proposition}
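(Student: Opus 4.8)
The plan is to reduce \eqref{gek0} to the Riesz representation \eqref{repg} of the Green's function and then to estimate the resulting $\Bbbk$-potential by monotonicity of the kernel. Since $x\in D$ and $D$ is a bounded domain, the harmonic measure $\updelta_x^{\complement D}$ is by \eqref{upg} a probability measure carried by the boundary, i.e. ${\sf supp\,}\updelta_x^{\complement D}\subset\partial D$ and $\updelta_x^{\complement D}({\mathbb R}^{\tt d})=1$. Rewriting \eqref{repg} as
$$
{\sf g}_x^D(y)+\Bbbk\bigl(|y-x|\bigr)=\int_{\partial D}\Bbbk\bigl(|y-z|\bigr){\,{\mathrm d}}\updelta_x^{\complement D}(z),
$$
I see that it suffices to show that the right-hand side does not exceed $\Bbbk(\diameter\! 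D)$.

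For this, first note that the kernel $\Bbbk=\Bbbk_{\tt d-2}$ is increasing on ${\mathbb R}^+\setminus 0$ in each of the three cases of \eqref{kKd-2} (the functions $t$, $\ln t$ and $-t^{-({\tt d}-2)}$). Next, for the fixed $y\in\overline D$ and every $z$ in the support $\partial D\subset\overline D$ one has $|y-z|\le\diameter\! D$ directly from the definition of the diameter, whence $\Bbbk(|y-z|)\le\Bbbk(\diameter\! D)$. Integrating this pointwise bound against the probability measure $\updelta_x^{\complement D}$ gives $\int_{\partial D}\Bbbk(|y-z|){\,{\mathrm d}}\updelta_x^{\complement D}(z)\le\Bbbk(\diameter\! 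D)$, and substitution into the displayed identity yields \eqref{gek0}. The only point requiring care is the pole $y=x$, where $\Bbbk(|y-x|)=\Bbbk(0)$: for ${\tt d}\ge 2$ one has $\Bbbk(0)=-\infty$, so the right side of \eqref{gek0} is $+\infty$ and the inequality is trivial, while for $y\ne x$ the term $\Bbbk(|y-x|)$ is finite and may be transposed freely as above. I do not anticipate a genuine obstacle, since the whole argument rests only on monotonicity of $\Bbbk$ together with the normalization $\updelta_x^{\complement D}({\mathbb R}^{\tt d})=1$ of harmonic measure.

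An equally short alternative avoids the representation entirely. Because every point of $\overline D$ lies within distance $\diameter\! D$ of $x$, one has $\overline D\subset B_x(r)$ for every $r>\diameter\! D$; hence the domain-monotonicity \eqref{prpodG}, combined with the symmetry of the Green's function, gives ${\sf g}_x^D(y)\le{\sf g}_x^{B_x(r)}(y)$, and the explicit ball formula \eqref{Byr} evaluates the latter as $\bigl(\Bbbk(r)-\Bbbk(|y-x|)\bigr)^+$. For $y\in\overline D$ the difference $\Bbbk(r)-\Bbbk(|y-x|)$ is nonnegative, so the positive part can be dropped; letting $r\downarrow\diameter\! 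D$ and invoking continuity of $\Bbbk$ then recovers \eqref{gek0}.
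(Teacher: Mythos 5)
Your first argument coincides with the paper's own proof: it estimates the $\Bbbk$-potential in the Riesz representation \eqref{repg} by $\Bbbk(\diameter\!D)$, using the normalization $\updelta_x^{\complement D}({\mathbb R}^{\tt d})=1$ from \eqref{upg}, the bound $|y-z|\leq \diameter\!D$ for $z\in\partial D$, $y\in\overline D$, and the increasing monotonicity of $\Bbbk$ from \eqref{kKd-2}. Your alternative via the domain monotonicity \eqref{prpodG} and the explicit ball formula \eqref{Byr} is precisely the second route indicated in the remark immediately following the paper's proof, so both of your arguments are correct and match the paper.
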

\begin{proof} 
Из представления \eqref{repg} получаем
$$
{\sf g}_x^D(y) \overset{\eqref{repg}}{\leq} 
\updelta_x^{\complement D}({\mathbb R}^{\tt d})\sup_{z\in \partial D} \Bbbk \bigl(|y-z|\bigr) - \Bbbk\bigl(|y-x|\bigr)
\overset{\eqref{upg}}{\leq} \sup_{\stackrel{z\in \partial D}{y\in \overline D}} \Bbbk \bigl(|y-z|\bigr) - \Bbbk\bigl(|y-x|\bigr).
$$
Здесь  для  любых $z\in \partial D$ и  $ y\in \overline D$ имеет место неравенство   $|y-z|\leq \diameter\!D$  и в силу возрастания функции $\Bbbk$, определённой в  \eqref{kKd-2},  получаем   \eqref{gek0}.
\end{proof}
\begin{remark}
Другое доказательство неравенства \eqref{gek0} можно получить  из принципа подчинения для функций Грина 
\eqref{prpodG} и явного вида функции Грина \eqref{Byr} для специальных шаров. 
\end{remark}

\begin{proposition}\label{prpG}
Если $\mu$ --- мера Радона  на $ {\mathbb R}^{\tt d}$ с сужением $\mu\!\bigm|_{\overline D}$ на замыкание $\overline D$ ограниченной области $D$, то  
имеют место неравенства для потенциала Грина меры $\mu$ 
по $D\subset {\mathbb R}^{\tt d}$ 
\begin{equation}\label{repr}
\int {\sf g}_x^D(y){\,{\mathrm d}} \mu(y)\leq
{\sf N}_x^{\mu|_{\overline D}} (\diameter\!D )\in \overline {\mathbb R}^+ \quad\text{при всех $x\in D$}.
\end{equation}
\end{proposition}
\begin{proof}
По свойству  \eqref{ig0} потенциал Грина меры $\mu$ равен 
$$
\int_{\overline D}{\sf g}_x^D(y){\,{\mathrm d}} \mu(y)
\overset{\eqref{ig0}}{=}\int_{\overline D}{\sf g}_x^D(y){\,{\mathrm d}} \mu\!\bigm|_{\overline D}(y)
\leq \int_{\overline B_x(\diameter\!D)}{\sf g}_x^D(y){\,{\mathrm d}} \mu\!\bigm|_{\overline D}(y)
\quad\text{при любых $x\in D$},
$$
поскольку $\overline D\subset \overline B_x(\diameter\!D)$, откуда по предложению \ref{pr_g}
\begin{equation*}
\int_{\overline D}{\sf g}_x^D(y){\,{\mathrm d}} \mu(y)\overset{\eqref{gek0}}{\leq} 
\int_{\overline B_x(\diameter\!D)}
 \Bigl(\Bbbk(\diameter\! D)- \Bbbk\bigl(|y-x|\bigr)\Bigr)
{\,{\mathrm d}} \mu\!\bigm|_{\overline D}(y)
\overset{\eqref{repr}}{=}
{\sf N}_x^{\mu|_{\overline D}} (\diameter\!D)\in \overline {\mathbb R}^+
\end{equation*}
для каждой точки   $x\in D$.
\end{proof}

\begin{proof}
[Доказательство неравенства \eqref{infuI20} теоремы \ref{th1_1}]
Функция 
\begin{equation}\label{hM}
h:= {\mathsf H}_u^D\colon x\underset{x\in D}{\longmapsto} \int_{\partial D}u {\,{\mathrm d}} \updelta_x^{\complement D}
\end{equation}
--- {\it наилучшая гармоническая мажоранты  функции $u$ внутри $D$\/} 
\cite[гл.~IX, \S~6]{Brelot}, \cite[5.7]{HK},  для которой 
по предложению \ref{th2_1} при одноточечном $S:=x\in D$ имеет место оценка снизу 
\begin{equation}\label{hmh}
  h(x)-h(o)\overset{\eqref{hm}}{\geq}
-\bigl( {\sf dist}_{{\sf har}}^D(o,x)-1\bigr)\Bigl(\sup_D h-h(o)\Bigr)
\quad\text{для любой точки $x\in  D$,}
\end{equation}
где обе скобки справа положительны. Поскольку 
\begin{subequations}\label{ho}
\begin{gather}
h(o)\geq u(o)=0, 
\tag{\ref{ho}o}\label{hoo}
\\ 
0\overset{\eqref{hoo}}{\leq} \sup_Dh\overset{\eqref{hM}}{=}
 \sup_{x\in D}\int_{\partial D}u {\,{\mathrm d}} \updelta_x^{\complement D}
\leq \sup_{x\in D} \updelta_x^{\complement D}(\partial D)\sup_{\partial D}u\overset{\eqref{upg}}{=} \sup_{\partial D}u,
\tag{\ref{ho}D}\label{hoD}
\end{gather}
\end{subequations}
неравенство \eqref{hmh} влечёт за собой неравенства
\begin{multline}\label{hdis}
h(x)\overset{\eqref{hoo}}{\geq} h(x)-h(o)\overset{\eqref{hmh}}{\geq}
-\bigl( {\sf dist}_{{\sf har}}^D(o,x)-1\bigr)\Bigl(\sup_D h-h(o)\Bigr)
\\
\overset{\eqref{hoo}}{\geq}
-\bigl( {\sf dist}_{{\sf har}}^D(o,x)-1\bigr)\sup_D h
\overset{\eqref{hoD}}{\geq}
-\bigl( {\sf dist}_{{\sf har}}^D(o,x)-1\bigr)\sup_{\partial D}u
\quad\text{для любоой точки $x\in  D$.}
\end{multline} 
По формуле Пуассона\,--\,Йенсена \cite[4.5]{Rans}, \cite[5.7.4]{HK} для субгармонической на $\overline D$ функции $u$ 
\begin{equation}\label{PJ}
u(x)= h(x) -\int
{\sf g}_x^{D}(y){\,{\mathrm d}} \varDelta_u
\quad\text{для всех точек $x\in D$.}
\end{equation}
Применяя оценку сверху \eqref{repr} предложения \ref{prpG} к мере $\mu=\varDelta_u$ с сужением 
$\varDelta$ на $\overline D$, получаем 
\begin{equation*}
\int
{\sf g}_x^{D}(y){\,{\mathrm d}} \varDelta_u\overset{\eqref{repr}}{\leq}
{\sf N}_x^{\varDelta} (\diameter\!D )\in \overline {\mathbb R}^+ \quad\text{при всех $x\in D$}, 
\end{equation*}
что через  формулу Пуассона\,--\,Йенсена \eqref{PJ} в сочетании с  \eqref{hdis} даёт оценку снизу 
\eqref{infuI20}.
\end{proof}

\section{Оценка функции Грина снизу через расстояние Гарнака}
Следующий результат может быть полезен и в других вопросах, поэтому устанавливается несколько больше, чем необходимо в настоящей статье.
\begin{proposition}\label{gsn}
В условиях \eqref{oBDG+} и обозначениях \eqref{rd}
\begin{equation}\label{gDG}
\inf_{x\in \partial D}{\mathsf g}_o^G(x)
\geq \frac{\Bbbk(R+\text{\dj})-\Bbbk(R)}
{\sup\limits_{x\in \partial D}{\sf dist}_{{\sf har}}^{G\!\setminus\!o}\bigl(x,\partial B_o(R)\bigr)}.
\end{equation}
\end{proposition}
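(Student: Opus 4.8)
The plan is to bound ${\mathsf g}_o^G$ from below first on the intermediate sphere $\partial B_o(R)$, where an explicit ball Green's function is available, and then to carry that bound over to $\partial D$ by means of the Harnack distance of the punctured domain $G\setminus o$.

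First I would record the geometric inclusion $B_o\bigl(R+\text{\dj}\bigr)\subset G$. By \eqref{rdr} every point of $\partial D$ is at distance at least $R$ from $o$, while by \eqref{rdd} together with $\overline D\subset G$ from \eqref{oBDG+} every point of $\partial D$ is at distance at least $\text{\dj}$ from $\complement G$. Since any segment joining $o\in D$ to a point of $\complement G$ must cross $\partial D$, these two facts give ${\sf dist}(o,\complement G)\geq R+\text{\dj}$, i.e.\ $B_o(R+\text{\dj})\subset G$.

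Next, using domain monotonicity of the Green's function \eqref{prpodG} with $B_o(R+\text{\dj})\subset G$ together with the explicit formula \eqref{Byr} evaluated at $|y-o|=R$, I obtain
$$
{\mathsf g}_o^G(y)\geq {\mathsf g}_o^{B_o(R+\text{\dj})}(y)=\Bbbk(R+\text{\dj})-\Bbbk(R)>0
\qquad\text{for every } y\in\partial B_o(R),
$$
the strict positivity following from the monotonicity of $\Bbbk$ in \eqref{kKd-2} and $\text{\dj}>0$. By properties \eqref{ig} and \eqref{vig} the restriction of ${\mathsf g}_o^G$ to $G\setminus o$ belongs to ${\sf har}^+(G\!\setminus\!o)$, and both $\partial D$ and $\partial B_o(R)$ lie in $G\setminus o$ because $\overline{B_o(R)}\subset\overline D\subset G$ while $o$ belongs to neither boundary. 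Hence, for $x\in\partial D$ and $y\in\partial B_o(R)$, applying the defining inequality \eqref{hard} of the Harnack distance to $h={\mathsf g}_o^G$ yields
$$
{\mathsf g}_o^G(x)\geq\frac{{\mathsf g}_o^G(y)}{{\sf dist}_{{\sf har}}^{G\!\setminus\!o}(x,y)}\geq\frac{\Bbbk(R+\text{\dj})-\Bbbk(R)}{{\sf dist}_{{\sf har}}^{G\!\setminus\!o}(x,y)}.
$$
Taking the infimum over $y\in\partial B_o(R)$ in the denominator (this infimum being the point-to-set Harnack distance ${\sf dist}_{{\sf har}}^{G\!\setminus\!o}(x,\partial B_o(R))$) and then the infimum over $x\in\partial D$ gives precisely \eqref{gDG}.

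I expect the two places needing care to be the bookkeeping of the extremizations and the harmonicity of ${\mathsf g}_o^G$ on the punctured domain. The bound on ${\mathsf g}_o^G(x)$ is sharpest at the $\partial B_o(R)$-point that is Harnack-closest to $x$, which is why an \emph{infimum} over $y$ appears in the denominator; the subsequent \emph{infimum} over $x\in\partial D$ then turns this into the \emph{supremum} displayed in \eqref{gDG}. One must also make sure that Harnack's inequality in the form \eqref{hard} is genuinely applicable, i.e.\ that ${\mathsf g}_o^G$ is a positive harmonic function on the punctured domain $G\setminus o$; granting this, the remainder is direct substitution.
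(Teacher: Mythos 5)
Your proof is correct and follows essentially the same route as the paper's: Harnack's inequality \eqref{hard} applied to the positive harmonic function ${\mathsf g}_o^G$ on $G\!\setminus\!o$, combined with the domain monotonicity \eqref{prpodG} and the explicit ball formula \eqref{Byr} on $\partial B_o(R)$, followed by the identical extremizations (supremum over $y\in\partial B_o(R)$ turning into the point-to-set Harnack distance, then infimum over $x\in\partial D$). The only difference is that you spell out the inclusion $B_o(R+\text{\dj})\subset G$ via the segment-crossing argument, a detail the paper merely asserts from the definitions \eqref{rdr} and \eqref{rdd}.
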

\begin{proof}
Функция ${\mathsf g}_o^G$ --- положительная гармоническая на области $G\!\setminus\!o$ и для любой точки $x\in \partial \overline D$ по определению \ref{dH}   расстояния Гарнака 
\begin{equation}\label{g1}
{\mathsf g}_o^G(x)\overset{\eqref{hard}}{\geq} \frac{1}{{\sf dist}_{{\sf har}}^{G\!\setminus\!o}(x,y)}{\mathsf g}_o^G(y)
\quad \text{для любой точки $y\in G\!\setminus\!o$.}
\end{equation}
Рассмотрим произвольные  точки $y\in \partial B_o(R)$. Очевидно, согласно выбору $R$ в \eqref{rdr} 
и $\text{\dj}$ в \eqref{rdd} шар $B_o(R+\text{\dj})$ содержится в  $G$. 
Отсюда по   принципу подчинения \eqref{prpodG} для функций Грина
\begin{equation}\label{g2}
 {\mathsf g}_o^G(y) \overset{\eqref{prpodG}}{\geq}  {\mathsf g}_o^{B_o(R+\text{\it \dj})}(y)\overset{\eqref{Byr}}{=}\Bbbk(R+\text{\dj})-\Bbbk(R)
\quad\text{при всех $y\in \partial B_o(R)$}
\end{equation}
Таким образом, для всех точек $x\in \partial D$
\begin{equation}\label{g3}
{\mathsf g}_o^G(x)\overset{\eqref{g1}}{\geq}
\frac{1}{{\sf dist}_{{\sf har}}^{G\!\setminus\!o}(x,y)}\bigl(\Bbbk(R+\text{\dj})-\Bbbk(R)\bigr)
\quad\text{при всех $y\in \partial B_o(R)$},
\end{equation}
где левая часть не зависит от $y$, а переход  к точной верхней грани по 
всем $y\in \partial B_o(R)$ даёт
\begin{multline*}
{\mathsf g}_o^G(x)\overset{\eqref{g3}}{\geq}
\sup_{y\in \partial B_o(R)}
\frac{1}{{\sf dist}_{{\sf har}}^{G\!\setminus\!o}(x,y)}\bigl(\Bbbk(R+\text{\dj})-\Bbbk(R)\bigr)
=\frac{1}{\inf\limits_{y\in \partial B_o(R)} {\sf dist}_{{\sf har}}^{G\!\setminus\!o}(x,y)}\bigl(\Bbbk(R+\text{\dj})-\Bbbk(R)\bigr)
\\
=\frac{1}{{\sf dist}_{{\sf har}}^{G\!\setminus\!o}\bigl(x,\partial B_o(R)\bigr)}\bigl(\Bbbk(R+\text{\dj})-\Bbbk(R)\bigr)
\quad\text{при всех $x\in \partial D$}.
\end{multline*}
Применяя точную нижнюю грань по $x\in \partial D$ к крайним частям этих (не)равенств, получаем 
\begin{equation*}
\inf_{x\in \partial D}{\mathsf g}_o^G(x)
\geq \inf_{x\in \partial D}\frac{1}{{\sf dist}_{{\sf har}}^{G\!\setminus\!o}\bigl(x,\partial B_o(R)\bigr)}\bigl(\Bbbk(R+\text{\dj})-\Bbbk(R)\bigr)=
\frac{\Bbbk(R+\text{\dj})-\Bbbk(R)}
{\sup\limits_{x\in \partial D}{\sf dist}_{{\sf har}}^{G\!\setminus\!o}\bigl(x,\partial B_o(R)\bigr)},
\end{equation*}
что даёт \eqref{gDG} и доказывает предложение \ref{gsn}.
\end{proof}

\section{Оценка меры Рисса через расстояние Гарнака}\label{Sec5}

\begin{proposition}\label{cord}
Пусть в условиях \eqref{oBDG+} и обозначениях \eqref{rdr}--\eqref{rdd}  функция  $u\not\equiv -\infty$  субгармоническая   на  $\overline G$. Тогда для её  меры Рисса $\varDelta_u$ имеет место неравенство 
\begin{equation}\label{de}
\varDelta_u(\overline D)\leq 
\frac{\sup\limits_{x\in \partial D}{\sf dist}_{{\sf har}}^{G\!\setminus\!o}\bigl(x,\partial B_o(R)\bigr)}{\Bbbk(R+\text{\dj})-\Bbbk(R)}
\Bigl( {\mathsf H}_u^G(o)-u(o)\Bigr),
\end{equation}
где  ${\mathsf H}_u^G$ --- наилучшая гармонической мажоранта функции $u$ внутри  $G$, определённая в  \eqref{hM}.
\end{proposition}

\begin{proof} При $u(o)=-\infty$ неравенство \eqref{de} очевидно, поскольку в его правой части  в этом случае $+\infty$.  В  соотношениях   
$$
\varDelta_u(\overline D)= \frac{1}{\inf\limits_{\overline D}{\mathsf g}_o^G}\int_{\overline D} 
\inf\limits_{\overline D}{\mathsf g}_o^G {\,{\mathrm d}} \varDelta_u(y) \leq \frac{1}{\inf\limits_{\overline D}{\mathsf g}_o^G}\int_{\overline D} 
{\mathsf g}_o^G (y) {\,{\mathrm d}} \varDelta_u(y)\leq 
\frac{1}{\inf\limits_{\partial D}{\mathsf g}_o^G}\int_{\overline G} 
{\mathsf g}_o^G (y) {\,{\mathrm d}} \varDelta_u(y)
$$
в завершающем неравенстве использован принцип минимума для  супергармонической, согласно \eqref{iig},  в $G\supset \overline D$ функции Грина ${\mathsf g}_o^G$. По формуле Пуассона\,--\,Йенсена 
последний  интеграл равен разности $ {\mathsf H}_u^G(o)-u(o)$ при $u(o)\neq -\infty$, откуда, используя предложение \ref{gsn}, получаем 
$$
\varDelta_u(\overline D)\leq \frac{1}{\inf\limits_{\partial  D}{\mathsf g}_o^G}\Bigl( {\mathsf H}_u^G(o)-u(o)\Bigr)
\overset{\eqref{gDG}}{\leq}
\frac{\sup\limits_{x\in \partial D}{\sf dist}_{{\sf har}}^{G\!\setminus\!o}\bigl(x,\partial B_o(R)\bigr)}{\Bbbk(R+\text{\dj})-\Bbbk(R)}
\Bigl( {\mathsf H}_u^G(o)-u(o)\Bigr),
$$
что доказывает неравенство \eqref{de}. 
\end{proof}
Теперь мы можем дать 
\begin{proof}[Доказательство неравенства \eqref{rd} теоремы \ref{th1_1}]
Для сла\-г\-а\-е\-м\-о\-го  ${\sf N}_x^{\varDelta}(\diameter\!D)$ из заключительной части уже доказанного неравенства \eqref{infuI20}
по предложению \ref{prkN} с мерой $\mu :=\varDelta$ и радиальными считающими функциями 
 $\varDelta_x^{\text{\tiny \rm rad}}$ с центром $x$ меры $\varDelta$ имеем
\begin{align*}
{\sf N}_x^{\varDelta} (\diameter\!D)& \overset{\eqref{reprk}}{=}
\int_0^{\diameter\!D}\bigl( \Bbbk (\diameter\!D)-\Bbbk (t)\bigr){\,{\mathrm d}} \varDelta_x^{\text{\tiny \rm rad}}(t) 
=\int_0^{r_x}\bigl( \Bbbk (r_x)-\Bbbk (t)\bigr){\,{\mathrm d}} \varDelta_x^{\text{\tiny \rm rad}}(t)
\\&+
\int_0^{r_x}\bigl( \Bbbk (\diameter\!D)-\Bbbk (r_x)\bigr){\,{\mathrm d}} \varDelta_x^{\text{\tiny \rm rad}}(t)
+\int_{r_x}^{\diameter\!D}\bigl( \Bbbk (\diameter\!D)-\Bbbk (t)\bigr){\,{\mathrm d}} \varDelta_x^{\text{\tiny \rm rad}}(t)
\\&\overset{\eqref{reprk}}{=}{\sf N}_x^{\varDelta} (r_x)+
\bigl( \Bbbk (\diameter\!D)-\Bbbk (r_x)\bigr)\varDelta_x^{\text{\tiny \rm rad}}(r_x)
+\int_{r_x}^{\diameter\!D}\bigl( \Bbbk (\diameter\!D)-\Bbbk (t)\bigr){\,{\mathrm d}} \varDelta_x^{\text{\tiny \rm rad}}(t)
\end{align*}
откуда ввиду возрастания функции \eqref{kKd-2} можем продолжить неравенства как 
\begin{align*}
{\sf N}_x^{\varDelta}(\diameter\!D)&\overset{\eqref{kKd-2}}{\leq}
{\sf N}_x^{\varDelta} (r_x)+
\bigl( \Bbbk (\diameter\!D)-\Bbbk (r_x)\bigr)\varDelta_x^{\text{\tiny \rm rad}}(r_x)
+\int_{r_x}^{\diameter\!D}\bigl( \Bbbk (\diameter\!D)-\Bbbk (r_x)\bigr){\,{\mathrm d}} \varDelta_x^{\text{\tiny \rm rad}}(t)\\
&={\sf N}_x^{\varDelta} (r_x)+
\bigl( \Bbbk (\diameter\!D)-\Bbbk (r_x)\bigr)\varDelta_x^{\text{\tiny \rm rad}}(r_x)
+\bigl( \Bbbk (\diameter\!D)-\Bbbk (r_x)\bigr)\bigl(\varDelta_x^{\text{\tiny \rm rad}}({\diameter\!D})
-\varDelta_x^{\text{\tiny \rm rad}}(r_x)\bigl)\\
&={\sf N}_x^{\varDelta} (r_x)+
\bigl( \Bbbk (\diameter\!D)-\Bbbk (r_x)\bigr)\varDelta_x^{\text{\tiny \rm rad}}({\diameter\!D})
\quad\text{в каждой точке  $x\in D$.}
\end{align*}
Отсюда, применяя к $\varDelta_x^{\text{\tiny \rm rad}}({\diameter\!D})=\varDelta(\overline D)$ предложение 
\ref{cord} при $u(o)=0$, получаем 
\begin{align*}
{\sf N}_x^{\varDelta}(\diameter\!D)&\overset{\eqref{de}}{\leq}
\bigl(\Bbbk (\diameter\!D)-\Bbbk (r_x)\bigr)
\frac{\sup\limits_{x\in \partial D}{\sf dist}_{{\sf har}}^{G\!\setminus\!o}\bigl(x,\partial B_o(R)\bigr)}{\Bbbk(R+\text{\dj})-\Bbbk(R)}
\Bigl( {\mathsf H}_u^G(o)-u(o)\Bigr)
+{\sf N}_x^{\varDelta} (r_x)\\
&\overset{\eqref{hoD}}{\leq}
\bigl(\Bbbk (\diameter\!D)-\Bbbk (r_x)\bigr)
\frac{\sup\limits_{x\in \partial D}{\sf dist}_{{\sf har}}^{G\!\setminus\!o}\bigl(x,\partial B_o(R)\bigr)}{\Bbbk(R+\text{\dj})-\Bbbk(R)}
\sup_{\partial G}u
+{\sf N}_x^{\varDelta} (r_x),
\end{align*}
что по доказанному  неравенству \eqref{infuI20} 
 даёт \eqref{rd}. 
\end{proof}

\begin{proof}[Доказательство следствия \ref{cor1}]
Поскольку множество $(-\infty)_u$ из   \eqref{keyiu} для субгармонической функции $u\not\equiv -\infty$ 
на $\overline G\supset D\supset S$ имеет нулевую ёмкость \eqref{CapE} и, как следствие,
нулевую  $h$-меру Хаусдорфа \eqref{hH} для любой функции $h$ со свойством $h(0)=0$ при радиусе $r$ \cite{Carleson}, \cite[5.2.1]{HK}, то в доказательстве можно пренебречь рассмотрением $S\cap (-\infty)_u$.

Рассмотрим сначала точки $x\in S$, для которых выполнены неравенства 
\begin{equation}\label{vDxt}
\varDelta_x(t)\leq h(t)\sup_{\partial G}u \quad\text{при всех  $t\in [0, r]$}.
\end{equation}
Для таких точек $x$, удовлетворяющих \eqref{vDxt}, получаем
$$
{\sf N}_{x}^{\varDelta}(r)\overset{\eqref{muyrN}}{=}
\widehat{\tt d}\int_0^r\frac{\varDelta_x^{\text{\tiny \rm rad}}(t)}{t^{{\tt d}-1}} {\,{\mathrm d}} t
\overset{\eqref{vDxt}}{\leq}  \widehat{\tt d}\int_0^r\frac{h(t)\sup_{\partial G}u}{t^{{\tt d}-1}} {\,{\mathrm d}} t
\overset{\eqref{N0h}}{=}N_0^h(r)\sup\limits_{\partial G}u\overset{\eqref{N0h}}{<}+\infty.
$$
Это вместе с оценкой \eqref{infuI2} теоремы \ref{th1_1} при выборе $r_x:=r$ для точек $x\in S$, удовлетворяющих \eqref{vDxt}, даёт неравенства  
\begin{multline}\label{kor}
u(x)\geq -\bigl({\sf dist}_{{\sf har}}^{D}(o,x) -1\bigr)\sup_{\partial D} u- 
\frac{\Bbbk(\diameter\!D)-\Bbbk(r)}{\Bbbk(R+\text{\dj})-\Bbbk(R)}\sup\limits_{y\in \partial D}{\sf dist}_{{\sf har}}^{G\!\setminus\!o}\bigl(y,\partial B_o(R)\bigr)\sup_{\partial G} u
 -N_0^h(r)\sup\limits_{\partial G}u 
\\
\overset{\eqref{oBDG+},\eqref{hard}}{\geq} -\bigl({\sf dist}_{{\sf har}}^{D}(o,x) -1\bigr) \sup_{\partial G} u- 
\frac{\Bbbk(\diameter\!D)-\Bbbk(r)}{\Bbbk(R+\text{\dj})-\Bbbk(R)}\sup\limits_{y\in \partial D}{\sf dist}_{{\sf har}}^{G\!\setminus\!o}\bigl(y,\partial B_o(R)\bigr)\sup_{\partial G} u
 -N_0^h(r)\sup\limits_{\partial G}u 
\\
\geq -\biggl(
\sup_{y\in S}{\sf dist}_{{\sf har}}^{D}(o,y)  -1
+\frac{\Bbbk(\diameter\!D)-\Bbbk(r)}{\Bbbk(R+\text{\dj})-\Bbbk(R)}\sup\limits_{y\in \partial D}{\sf dist}_{{\sf har}}^{G\!\setminus\!o}\bigl(y,\partial B_o(R)\bigr)+N_0^h(r)\biggr)
\sup_{\partial G} u,
\end{multline}
т.е. вне исключительного множества $E$ всех точек $x\in S$, для которых нарушено свойство \eqref{vDxt}.
Применение точной нижней грани по всем точкам $x\in S\setminus E$ к крайним частям соотношений  \eqref{kor}, где правая часть уже не зависит от $x\in S\setminus E$, приводит к требуемой оценке  \eqref{{eEr}u}. 

Для оценки размеров исключительного множества 
\begin{equation}\label{Eyx}
E:=\Bigl\{ x\in S\Bigm|  \varDelta_x(t_x)\geq  h(t_x)\sup_{\partial G}u
\quad\text{для некоторого $t_x\in (0,r]$} \Bigr\}
\end{equation}
будет использована 
\begin{theoB}[{\cite[лемма  3.2]{Landkof}, \cite[2.8.14]{Federer},   \cite{FL}, \cite[теорема 2]{GriKri10}, \cite[I.1, Замечания]{Gusman}, \cite{Sullivan}, \cite{Krantz}}] Пусть  $t\colon x\underset{x\in E}{\longmapsto} {\mathbb R}^+\setminus 0$ --- функция на  $E\subset {\mathbb R}^{\tt d}$. Если  $t$ или $E$ ограничены, то  найдётся  последовательность попарно  различных точек $x_j\in E$, $j\in N\subset {\mathbb N}$,  для которых $E\subset \bigcup\limits_{j\in N} \overline B_{x_j}\bigl(t(x_j)\bigr)$ и пересечение любых $5^{\tt d}$  различных шаров $\overline B_{x_j}\bigl(t(x_j)\bigr)$ пусто.
\end{theoB}
Применяя теорему Безиковича к множеству $E$ с функцией $t(x)\overset{\eqref{Eyx}}{\underset{x\in E}{:=}}t_x$
и используя предложение \ref{cord}, для некоторого не более чем счётного числа шаров $\overline B_{x_j}\bigl(t(x_j)\bigr)$, 
$j\in N\subset {\mathbb N}$, покрывающего $E$ с кратностью не более чем $5^{\tt d}$, получаем 
\begin{equation*}
\sum_j h(t_{x_j})
\overset{\eqref{Eyx}}{\leq} \sum_{j} \frac{\varDelta_{x_j}(t_{x_j})}{\sup\limits_{\partial G}u}
\leq 5^{\tt d}\varDelta(\overline D) \frac{1}{\sup\limits_{\partial G}u}
\overset{\eqref{de}}{\leq} 
 \frac{5^{\tt d}}{\sup\limits_{\partial G}u}
\frac{\sup\limits_{x\in \partial D}{\sf dist}_{{\sf har}}^{G\!\setminus\!o}\bigl(x,\partial B_o(R)\bigr)}{\Bbbk(R+\text{\dj})-\Bbbk(R)}
\Bigl( {\mathsf H}_u^G(o)-u(o)\Bigr),
\end{equation*}
где 
$$
\Bigl( {\mathsf H}_u^G(o)-u(o)\Bigr)\leq \sup\limits_{\partial G}u-u(o)\overset{\eqref{oBDG+}}{=}\sup\limits_{\partial G}u,
$$
что даёт 
\begin{equation*}
\sum_j h(t_{x_j})\leq 
{5^{\tt d}}
\frac{\sup\limits_{x\in \partial D}{\sf dist}_{{\sf har}}^{G\!\setminus\!o}\bigl(x,\partial B_o(R)\bigr)}{\Bbbk(R+\text{\dj})-\Bbbk(R)}, 
\quad t_{x_j}\overset{\eqref{Eyx}}{\underset{j\in N}{\in}}(0, r].
\end{equation*}
Последнее по определению \ref{defH} $h$-обхвата ${\mathfrak m}_h^{\text{\tiny $r$}}$ в \eqref{mr}
и означает, что выполнено \eqref{{eEr}E}.
\end{proof}
 
\begin{remark}
Применения оценок расстояния  Гарнака, установленных ранее в   \cite[\S~2, \S\S~9--11, предложение 11.3]{KhaKhaChe08I}, 
\cite[гл.~III]{KhaKhaChe08II}, \cite{Kha21H}, к полученным в настоящей статье оценкам снизу субгармонических функций  и иным задачам предполагается изложить  в последующих статьях. 
\end{remark}

\renewcommand\refname{\bf Литература}

\bibliographystyle{amsplain}

\end{document}